\documentclass[12pt]{amsart}
\usepackage{amsfonts, amsbsy, amsmath, amssymb}
\usepackage{xcolor}
\usepackage{graphicx} 

\usepackage{amsmath}
\usepackage{amssymb}
\usepackage{tikz-cd}

\usepackage{physics}
\newcommand{\Leib}{\mathrm{Leib}}

\theoremstyle{plain}
\newtheorem*{theorem*}{Theorem}
\newtheorem*{thmex*}{Theorem~\ref{example}}
\newtheorem*{thmasymp*}{Theorem~\ref{thmAsymp}}
\newtheorem{theorem}{Theorem}[section]

\newtheorem{lemma}[theorem]{Lemma}

\newtheorem{proposition}[theorem]{Proposition}

\newtheorem{example}[theorem]{Example}

\usepackage{comment}

\usepackage{datetime}

\theoremstyle{definition}
\newtheorem{definition}[theorem]{Definition}

\usepackage[lite]{amsrefs}

\renewcommand{\PrintDOI}[1]{\href{http://dx.doi.org/\detokenize{#1}}{doi: \detokenize{#1}}%
	\IfEmptyBibField{pages}{, (to appear in print)}{}}

\title{Lie Quandles, Leibniz Racks and Noether's First Theorem}

\author{Mohamed Elhamdadi}
\address{Department of Mathematics and Statistics, University of South Florida, Tampa, Florida, USA}
\email{emohamed@usf.edu}

\author{Bryce Virgin}
\address{Department of Mathematics and Statistics, University of South Florida, Tampa, Florida, USA}
\email{bgvirgin@usf.edu}
\date{April 17, 2026}

\begin{document}

\begin{abstract}

In [Self-distributive structures in physics.
Internat. J. Theoret. Phys. 64 (2025), no. 3, Paper No. 73], Fritz was motivated by the structure of Hamiltonian/Heisenberg mechanics to define the notion of "Lie Quandle", which he argued are nonlinear generalizations of finite dimensional real Lie algebras. In this article, we will investigate a  linear/nonlinear correspondence to which Fritz' is a special case, classify a 
class of generalizations of these objects, as well as describe some results in the direction of a nonlinear analogue of Noether's first theorem first described by Fritz.

\end{abstract}

\maketitle

\tableofcontents

\section{Introduction}\label{intro}
Quandles and Racks are algebraic constructs that have arisen from both the study of knots \cite{Joyce, Matveev, EN}, and from the study of the conjugation structure of groups \cite{BES}. Racks are a concrete realization of the desire to form an algebraic object whose elements can be assigned to automorphisms of itself 
\cite{Brieskorn}. Quandles appeared in the literature with many different names.  Mituhisa Takasaki \cite{Takasaki} introduced, in 1942, the notion of kei as an abstraction of the notion of symmetric transformation.
The earliest known work on racks is contained in the 1959 correspondence (unpublished) between John Horton Conway and Gavin Wraith who studied
racks in the context of the conjugation operation in a group.  Around 1980, Joyce \cite{Joyce} and Matveev \cite{Matveev} independently introduced the notion of a quandle. To each oriented knot, they associated a quandle in such a way that the classical problem of determining knot equivalence is translated into the problem of quandle isomorphism. 

Since their introduction, quandles have been the subject of extensive study in both topology and algebra. From an algebraic perspective, they are naturally related to a range of algebraic structures, including Lie algebras \cite{CCES1, CCES2},  Frobenius algebras \cite{CCEKS} and the Yang-Baxter equation \cite{BES}, Hopf algebras \cite{And-Grana, CCES2}, quasigroups and Moufang loops \cite{Elhamdadi}, and ring theory \cite{EFT}. 
Quandles have also been used in the construction of invariants for handlebody knots by enriching their structure with a group-indexed family of operations, leading to the notion of a $G$-family of quandles introduced in \cite{IIJO}. This enhanced framework allows for the development of stronger invariants for knots, links, and related objects such as spatial graphs. 

Topological quandles were investigated in \cite{Rubinsztein} and their cohomology was studied in \cite{EM, ESZ}.  Quandles and Lie racks, objects partially solving the coquecigrue problem, were investigated in \cite{Kinyon}. In \cite{Covez} the author showed that
Leibniz algebras integrate (locally) to local Lie racks, providing a precise solution to the coquecigrue problem \cite{Loday}. In \cite{LucTa} the author showed that affine algebraic racks provide a solution to a version of the well-known coquecigrue problem for left and right Leibniz algebras over arbitrary fields.  Taking a family of quandles at a time, the notion of $G$-family of quandles was introduced in \cite{IIJO} with the goal of producing invariants of handlebody knots.   


Tobias Fritz introduced the notion of Lie quandle in \cite{Fritz} as a nonlinear generalization of finite dimensional real Lie algebras. Fritz was motivated to consider this generalization by the way observables in both the Hamiltonian formulation of classical mechanics and the Heisenberg formulation of quantum mechanics generate 1-parameter subgroups of automorphisms of the entire observable algebra. This feature of these frameworks led to a particularly simple way of stating Noether's first theorem, which necessarily leads to an equivalent statement of Noether's first theorem for Lie quandles. The Lie Quandles section in \cite{Fritz} concludes with some examples of Lie quandles both satisfying Noether's first theorem and not, and a suggestion that connectedness may be a sufficient condition to imply Noether's first theorem on a Lie quandle.

In this article, we describe some of the relationships between some categories related to Lie quandles and Lie algebras, discuss a special case classification result of one of these categories, and investigate the necessity or sufficiency of possible hypotheses for a further generalizations of Noether's first theorem in addition to Fritz' suggestion of connectedness.

The paper is organized as follows:  In Section~\ref{Sec2}, we review basic definitions, including those of Lie quandles and 
$G$-families of quandles, and introduce new terminology, such as the notion of a smooth $G$-family of smooth racks. In Section~\ref{Sec3} we show that finite dimensional real Leibniz algebras have the same relationship to smooth $\mathbb{R}$-families of smooth racks that finite dimensional real vector spaces have to smooth manifolds, and consequently this correspondence is true of finite dimensional real Lie algebras and Lie quandles as suggested by Fritz' argument that the Lie quandle naturally associated to a Lie algebra by exponentiation contains all of the information of the originial Lie algebra. We also show that smooth $G$-families of smooth racks ($G$ a simply connected Lie group) can be reduced to certain families of smooth $\mathbb{R}$-families of smooth racks. Section~\ref{Sec4} provides specialized classification of a simple class of smooth $G$-families of smooth racks. We conclude with Section~\ref{Sec5}, in which we discuss Noether's first theorem in the context of smooth $G$-families of smooth racks, and among these results we see that connectedness is not a necessary condition.

\vspace{5mm}

\section{Review of racks and Leibniz algebras}\label{Sec2}
We start by briefly recalling some definitions and examples.
\begin{definition}
     A rack is a pair $(Q,\triangleleft)$ where $Q$ is a set and $\triangleleft:Q^2 \to Q$ is a map satisfying the following two axioms:

\begin{enumerate}

    \item 
 The right multiplication $R_x: Q \to Q, y \mapsto y \triangleleft x$, is a bijection,

\item 
For all $p,q,r \in Q$, $ (p  \triangleleft q )\triangleleft r = (p  \triangleleft r )\triangleleft (q  \triangleleft r).$ 
\end{enumerate}

\end{definition}
If $p  \triangleleft p=p$ holds for all $p \in Q$, then $(Q,\triangleleft)$ is called a quandle.\\

 Typical examples of quandles include the following. 
  \begin{itemize}

  \item
Any non-empty set $Q$ with the operation $p \triangleleft q=p$ for any $p,q \in Q$ is
a quandle called a  {\it trivial} quandle.

\item
A conjugacy class $Q$ of a group $G$ with the quandle operation $p \triangleleft q=q^{-1}pq $ is called a {\it conjugation quandle}. 

\item
For a group $G$, the binary operation $p \triangleleft q=q p^{-1}q $ is called a {\it core quandle}.

  \item 
  A {\it generalized Alexander quandle} is defined  by 
a pair $(G, f)$ where 
$G$ is a  group and $f \in {\rm Aut}(G)$,
and the rack operation is defined by 
$p \triangleleft q=f(pq^{-1}) q $. 
If $G$ is abelian, this is called an {\it Alexander quandle}.

\end{itemize}

\begin{definition}
     A Lie quandle is a pair $(Q,\triangleleft)$ where $Q$ is a smooth manifold and $\triangleleft:Q^2\times \mathbb{R} \to Q$ is a smooth map having the following three properties:

\begin{enumerate}
    \item 
 For all $q \in Q$ and $ t \in \mathbb{R}$, $q \triangleleft_t q =q,$  

\item 
For all $q,p \in Q$ and $t,s \in \mathbb{R}$, $(q  \triangleleft_t p )\triangleleft_s p = q \triangleleft_{t+s} p $ and $q\triangleleft_0 p = q$,

\item 
For all $ t,s \in \mathbb{R}$  
and $q,p,r \in Q$, $(q\triangleleft_t p) \triangleleft_s r = (q \triangleleft_s r)\triangleleft_{t} ( p\triangleleft_s r) $ 
\end{enumerate}

\end{definition}

\vspace{5mm}

This definition turns out to be a specialization of an object, called a $G$-family of quandles \cite{IIJO}, which has already found its way into the quandle literature, corresponding to collections of quandles parameterized by group elements.

\vspace{5mm}

\begin{definition}

A $G$-family of quandles for a given group $G$ is a pair $(Q,\triangleleft)$ where $Q$ is a set and $\triangleleft:Q^2\times G \to Q$ is a function satisfying:
\begin{enumerate}
    \item 
 $q \triangleleft_g q =q$ for all $q \in Q$ and $g \in G$.

\item 
$ (q  \triangleleft_g p )\triangleleft_h p = q \triangleleft_{gh} p $ and $q\triangleleft_e p = q$ for all $q,p \in Q$.

\item 
$ (q\triangleleft_g p) \triangleleft_h r = (q \triangleleft_h r)\triangleleft_{h^{-1}gh} (p\triangleleft_h r) $ for all $g,h \in G$ and $q,p,r \in Q$.
\end{enumerate}
\end{definition}

We will denote the $G$-family of quandles by $(Q, \{\triangleleft_g\}_{g \in G})$.  To make the notation shorter we will use $(Q,G)$ instead. 
\vspace{5mm}

The following are a few examples of $G$-families of quandles.
    \begin{itemize}
        \item 
        For any group $G$ and any set $Q$, defining $p \triangleleft_g q=p$ for all $p,q \in Q$ and all $g \in G$.  This gives a $G$-family of quandles called the \emph{trivial} $G$-family of quandles.
        
        \item
        Let $(Q, \triangleleft )$ be a quandle of cardinality $n+1$, such that ${R_q}^{n}(p)=p$ for all $p,q \in Q$.  Define $p \triangleleft_iq={R_q}^i(p)$ then  $(Q,\mathbb{Z})$ is a $\mathbb{Z}$-family of quandles and also $\mathbb{Z}_{n}$-family of quandles.

        \item 
        For a ring $R$ and a group $G$ let $R[G]$ be the group ring.  Then any (right) $R[G]$-module $Q$ becomes a $G$-family of quandles by defining $p \triangleleft_g q:=pg+q(1-g)$.
    \end{itemize}

    Notice that a $G$-family of quandles $(Q,G)$ induces a quandle operation on $Q \times G$ given by 
    \[
    (p,g)*(q,h):= (p \triangleleft_g q, h^{-1}gh). 
    \]

 We are grateful to Masahico Saito for observing that Lie quandles can be viewed as 
instances of $\mathbb{R}$-families of quandles internal to the category of smooth manifolds. Given this observation, it is natural to internalize the general notion of $G$-families of quandles to the category of smooth manifolds, and to subject these new objects to Fritz' line of questioning.

\vspace{5mm}

\begin{definition}
     We define a \textbf{smooth} $G$-\textbf{family of smooth racks} for a given (real) Lie group $G$ to be a pair $(R,\triangleleft)$ where $R$ is a smooth manifold and $\triangleleft:R^2\times G \to R$ is a smooth map having the following three properties:

\begin{enumerate}
\item 
$q\triangleleft_e p = q$ for all $q,p \in R$.
    \item 
 $ (q  \triangleleft_g p )\triangleleft_h p = q \triangleleft_{gh} p$ for all $g,h \in G$ and $q,p \in R$.

\item 
$ (q\triangleleft_g p) \triangleleft_h r = (q \triangleleft_h r)\triangleleft_{h^{-1}gh} (p\triangleleft_h r) $ for all $g,h \in G$ and $q,p,r \in R$.
\end{enumerate}
\end{definition}
A smooth $G$-family of smooth racks $(R,\triangleleft)$ is called a smooth $G$-family of smooth quandles if $q\triangleleft_g q =q$ for all $q \in R$ and $g \in G$. A smooth $\mathbb{R}$-family of smooth racks is called a \textbf{Leibniz Rack}. 

\begin{example}
    For any smooth manifold $R$ and any Lie group $G$, we can define the trivial smooth $G$-family of smooth rack operations via $p \triangleleft_g q=p$ for all $p,q \in R$ and all $g \in G$. If $G  =\mathbb{R}$, we call this family of operations the trivial Leibniz rack operation for $R$.
\end{example}

\begin{example}
    All Lie quandles are Leibniz racks.
\end{example}

\begin{example}
    For any Lie groups $F,G$, any closed Lie subgroup $H \subset F$, and any Lie group action $g \mapsto \rho_g \in \text{Aut}(F)$ such that $\rho_g(H) \subset H$ for all $g \in G$, the operation $f'H \triangleleft_g fH = f\rho_g(f^{-1}f')H$ is a smooth $G$-family of smooth rack operations on the coset manifold $F/H$. In ~\ref{Sec5} we will see that a special subset of this family of smooth $G$-families of smooth racks obeys Noether's first theorem.
\end{example}

\vspace{5mm}

\begin{definition}
    A morphism between two smooth Lie group families of smooth racks $(R,\triangleleft)$ and $(R',\triangleleft')$ is a smooth map $f:R\to R'$ and a Lie group homomorphism $\varphi:G \to G'$ such that:
    $$ f(p \triangleleft_g q) = f(p) \triangleleft_{\varphi(g)}'f(q) $$
\end{definition}

\vspace{5mm}

Due to the correspondence between simply connected Lie groups and finite-dimensional real Lie algebras, the category of smooth $G$-families of smooth racks for a simply connected $G$ is isomorphic to the category whose objects are families of Leibniz racks parameterized properly by the elements of a Lie algebra, as we will see in section ~\ref{Sec3}. 

 \vspace{5mm}

 Leibniz algebras were introduced by Loday \cite{Loday} as a non-commutative version of Lie algebras, where the bracket is not necessarily antisymmetric.  We recall the definition.

\begin{definition}    
A (right) Leibniz algebra is a vector space $A$ together with a bilinear bracket $[\cdot,\cdot]$ which has the (right) Leibniz property:
$$ [[a,b],c] = [[a,c],b]+[a,[b,c]]. $$
Every Lie algebra is a Leibniz algebra, and for both Lie and Leibniz algebras we adopt the convention that for each $a \in A$ the map $\text{ad}_a:A \to A$ is the inner derivation defined $\text{ad}_a(b) = [b,a]$ for each $b \in A$.
\end{definition}

\begin{example}
  It's clear that a Lie algebra is an example of Leibniz algebra.  In particular, $\mathfrak{sl}_2(\mathbb{R})$ with bracket $[X,Y]=XY-YX$ is a Leibniz algebra.  
\end{example}

\begin{example}[A non-Lie Leibniz algebra]
Let $L = \mathrm{span}\{e_1,e_2\}$ with
$[e_1,e_1] = e_2,$ and all other brackets being zero.
Then $L$ is a Leibniz algebra but not a Lie algebra.
\end{example}

\begin{example}[From associative algebras]
Let $A$ be an associative algebra. Define
\[
[x,y] := xy-yx.
\]
Then the Leibniz identity follows from associativity.
\end{example}

\begin{example}[Hemisemidirect product]
Let $\mathfrak{g}$ be a Lie algebra acting on a vector space $V$. Define
\[
[(x,v),(y,w)] = ([x,y], x \cdot w - y \cdot v).
\]
This gives a Leibniz algebra structure on $\mathfrak{g} \oplus V$.
\end{example}


A fundamental structural feature of Leibniz algebras is the following.

\begin{proposition}
Let $L$ be a Leibniz algebra and define
\[
\Leib(L) = \langle [x,x] : x \in L \rangle.
\]
Then $\Leib(L)$ is a two-sided ideal of $L$, and the quotient $L/\Leib(L)$ is a Lie algebra.
\end{proposition}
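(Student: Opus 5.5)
We work with the right Leibniz identity $[[a,b],c] = [[a,c],b]+[a,[b,c]]$, which says exactly that each $\text{ad}_c$ is a derivation of the bracket. Write $I=\Leib(L)$ for the linear span of the squares $[x,x]$. The plan has three steps: show $I$ is a right ideal, show it is a left ideal by proving the stronger fact that it is annihilated on the left, and then check that the quotient is Lie.

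\textbf{Step 1: $I$ is the span of symmetrized brackets.} Polarizing $[x+y,x+y]$ gives
\[
[x,y]+[y,x] = [x+y,x+y]-[x,x]-[y,y],
\]
so $[x,y]+[y,x]\in I$ for all $x,y$. Conversely, taking $y=x$ shows that each square is half of a symmetrized bracket. Hence $I$ is spanned by the elements $[x,y]+[y,x]$.

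\textbf{Step 2: $I$ is a right ideal.} Since $\text{ad}_z$ is a derivation,
\[
[[x,x],z] = [[x,z],x]+[x,[x,z]],
\]
which is a symmetrized bracket. By Step 1 it lies in $I$.

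\textbf{Step 3: $I$ is a left ideal.} This is the main obstacle in a right Leibniz algebra. I expect the cleanest route is to show $[a,[x,x]]=0$ for all $a,x$, which makes $I$ left-annihilating. Apply the identity with $b=c=x$:
\[
[[a,x],x] = [[a,x],x]+[a,[x,x]].
\]
This forces $[a,[x,x]]=0$. By linearity $[a,i]=0$ for every $i\in I$, so $I$ is trivially a left ideal.

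\textbf{Quotient.} The bracket descends to $L/I$ because $I$ is a two-sided ideal. In the quotient $[\bar x,\bar x]=0$ for all $x$, so the bracket is alternating and hence antisymmetric. Using antisymmetry, the right Leibniz identity rearranges to
\[
[a,[b,c]] = [[a,b],c]-[[a,c],b] = [[a,b],c]+[b,[a,c]],
\]
which is exactly the Jacobi identity. Therefore $L/\Leib(L)$ is a Lie algebra.
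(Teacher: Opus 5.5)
Your argument is correct. The paper states this proposition as a standard structural fact about Leibniz algebras and gives no proof, so there is no competing route to compare against.

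Two small remarks. First, the converse half of Step~1, which writes $[x,x]$ as half of $[x,x]+[x,x]$, divides by $2$ and is never used. Step~2 only needs that symmetrized brackets lie in $\Leib(L)$, which polarization gives over any field, so dropping it makes the whole argument characteristic-free. Second, by reading $\langle [x,x] : x\in L\rangle$ as the linear span you prove the stronger form of the statement. If it were read as the ideal generated by the squares, the ideal claim would be automatic and only your final paragraph would be needed, which your argument also covers.

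Your Step~3 in fact gives $[L,\Leib(L)]=0$. This is the standard sharper form of the result for right Leibniz algebras.
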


\begin{example}
For a Leibniz algebra $L$ defined by $[e_1,e_1]=e_2$, we have
\[
\Leib(L) = \mathrm{span}\{e_2\},
\]
and $L/\Leib(L)$ is abelian.
\end{example}




Let $L$ be a $2$-dimensional Leibniz algebra over $\mathbb{K}$. Then, up to isomorphism, the non-Lie case is given by:
\[
[e_1,e_1] = e_2,\; \text{and} \;\; [e_1,e_2]=[e_2,e_1]=[e_2,e_2]=0
\]

All other $2$-dimensional Leibniz algebras are Lie algebras.

\vspace{5mm}

\begin{definition}
    We define a smooth $A$-family of Leibniz racks for a given finite dimensional real Leibniz algebra $A$ to be a pair $(R,\triangleleft )$ where $R$ is a smooth manifold and $\triangleleft:R^2\times A\times \mathbb{R} \to R$ is a smooth map (with evaluation denoted $(q,p,a,t) \mapsto q\triangleleft_{(a,t)}p$, and we also define the operation $\triangleleft^a:R^2 \times \mathbb{R} \to R$ via $q\triangleleft^a_t p = q\triangleleft_{(a,t)}p$ for all $t \in \mathbb{R}$, $a \in A$, and all $q,p \in R$) satisfying the following two axioms:

\begin{enumerate}
    \item 
    For every $a \in A$, the pair $(R,\triangleleft^a)$ is a Leibniz rack.

    \item For every $a \in A$, every $t \in \mathbb{R}$, and all $p,q \in R$, it follows that $ p \triangleleft_t^a  q =p \triangleleft_{1}^{ta} q$.

\item  For all $a,b \in A$, all $t,s \in \mathbb{R}$, and all $o,p,q \in R$, we have that:
$$ (q \triangleleft_t^a p) \triangleleft_s^b o = (q  \triangleleft_{s}^b o ) \triangleleft_{ t }^{e^{s \cdot \text{ad}_b}(a)} (p  \triangleleft_{s}^b o ). $$

\end{enumerate}
\end{definition}

\begin{definition}
  A morphism between two smooth Leibniz families of Leibniz racks $(R, \triangleleft   )$ and $(R',\triangleleft')$ consists of a smooth map $f:R\to R'$ and a Leibniz algebra morphism $\varphi:A \to A'$ such that: $$f( p \triangleleft_{(a,t)} q) = f(p) \triangleleft'_{(\varphi(a),t)} f(q) \text{ for all }q,p \in R, \text{ }a\in A, \text{ and }t \in \mathbb{R}.$$
\end{definition}

\section{Leibniz Algebras and Leibniz Racks} \label{Sec3}

In \cite{Fritz}, Fritz showed how the Lie bracket of a Lie algebra may be recovered from the natural Lie quandle operation associated to it. His explanation formed the core of an argument that Lie quandles have the same relationship to finite dimensional real Lie algebras that smooth manifolds have to finite dimensional real vector spaces. This turns out to be a special case of a more general correspondence between finite dimensional real Leibniz algebras and Leibniz racks. The proof of the following lemma likely exists elsewhere in the literature, we simply include it for the benefit of this article's self-containment.

\vspace{5mm}

\begin{lemma}
     For any finite-dimensional, real Leibniz algebra $A$, and any $a \in A$, we have that $e^{ \text{ad}_a} :A \to A $ is an automorphism of Leibniz algebras.
     \end{lemma}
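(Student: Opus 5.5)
The plan is the standard one: show that $\text{ad}_a$ is a derivation of the bracket, then use the Leibniz rule for iterates of a derivation to show its exponential is multiplicative. Invertibility will come for free.

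First, rewrite the right Leibniz identity with the paper's convention $\text{ad}_c(x)=[x,c]$. The identity $[[x,y],c]=[[x,c],y]+[x,[y,c]]$ reads
\[
\text{ad}_c([x,y]) = [\text{ad}_c(x),y] + [x,\text{ad}_c(y)],
\]
so $D:=\text{ad}_a$ is a derivation of $(A,[\cdot,\cdot])$. Only bilinearity and this derivation property are used; no antisymmetry is needed, which is what makes the argument work for Leibniz (not just Lie) algebras.

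Second, prove by induction on $n$ the general Leibniz formula
\[
D^n[x,y]=\sum_{k=0}^{n}\binom{n}{k}[D^k x, D^{n-k}y].
\]
The inductive step applies $D$ to each term using the derivation property and then combines terms with Pascal's rule $\binom{n}{k-1}+\binom{n}{k}=\binom{n+1}{k}$.

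Third, since $A$ is finite dimensional, $e^{D}=\sum_{n\ge0}D^n/n!$ converges absolutely in $\text{End}(A)$ (in any norm). The bracket is bilinear on a finite-dimensional space, hence continuous, so we may compute
\[
[e^D x, e^D y]=\sum_{k,m\ge 0}\frac{1}{k!\,m!}[D^kx,D^my].
\]
Absolute convergence lets us regroup this double series by $n=k+m$, giving $\sum_n \frac{1}{n!}\sum_k\binom{n}{k}[D^kx,D^{n-k}y]=\sum_n \frac{1}{n!}D^n[x,y]=e^D[x,y]$. This justification of the Cauchy-product rearrangement is the only analytic point, and it is the step I expect to need the most care; it is routine once absolute convergence is noted. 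Alternatively, one can avoid series manipulation entirely: set $F(t)=e^{-tD}[e^{tD}x,e^{tD}y]$ and differentiate. The derivation property gives $F'(t)=0$, so $F(t)=F(0)=[x,y]$, and taking $t=1$ yields the claim.

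Finally, $e^D$ is linear, and it is invertible with inverse $e^{-D}$ because $D$ commutes with $-D$. Therefore $e^{\text{ad}_a}$ is a bijective Leibniz algebra homomorphism, i.e.\ an automorphism.
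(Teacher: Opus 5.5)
Your proof is correct. Your main route differs from the paper's, although the alternative you sketch at the end is essentially the paper's argument.

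The paper never expands the exponential. It sets $f(t)=e^{t\,\text{ad}_a}([b,c])$ and $g(t)=[e^{t\,\text{ad}_a}(b),e^{t\,\text{ad}_a}(c)]$. It then uses the right Leibniz identity, which is exactly your derivation property of $\text{ad}_a$, to show that both solve $u'=[u,a]$ with $u(0)=[b,c]$. It concludes $f=g$ by uniqueness for linear initial value problems. Your $F(t)=e^{-tD}[e^{tD}x,e^{tD}y]$ with $F'=0$ is the same argument, with that uniqueness proved inline via the integrating factor $e^{-tD}$.

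Your primary argument instead proves the higher Leibniz rule $D^n[x,y]=\sum_k\binom{n}{k}[D^kx,D^{n-k}y]$ and regroups a Cauchy product. This costs an induction and some convergence bookkeeping. The bound $\|[u,v]\|\le C\|u\|\,\|v\|$ dominates the double series by $C e^{2\|D\|}\|x\|\,\|y\|$, which is all you need. In return, the core identity is purely algebraic. Whenever $e^{D}$ is a finite sum, e.g.\ $D$ nilpotent over any field of characteristic zero, your argument needs no analysis at all. The ODE route, by contrast, depends on differentiating in a real (or complex) parameter. Its advantage is that it avoids series manipulation entirely.
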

     
\begin{proof}
 We begin by defining the smooth functions $f,g : \mathbb{R}\to A$ via:
$$ f(t) = e^{t [\cdot,a]}([b,c]) $$
$$ g(t) = [e^{t [\cdot,a]}(b),e^{t [\cdot,a]}(c)] $$
We note that $f(0) = [b,c] = g(0)$, and furthermore that:
$$ f'(t) = [e^{t [\cdot,a]}([b,c]),a] = [f(t),a] $$
$$ g'(t) = [[e^{t [\cdot,a]}(b),a],e^{t [\cdot,a]}(c)]+[e^{t [\cdot,a]}(b),[e^{t [\cdot,a]}(c),a]] $$
$$= [[e^{t [\cdot,a]}(b),e^{t [\cdot,a]}(c)],a]  = [g(t),a] $$
hence $f$ and $g$ are both solutions to the same first order linear initial value problem (IVP) $u' = [u,a]$ with $u(0) = [b,c]$, hence by the uniqueness of solutions to first order linear IVPs we must have that $f=g$. By setting $t=1$, we learn that: $$ e^{ [\cdot,a]}([b,c]) = [e^{t [\cdot,a]}(b),e^{t [\cdot,a]}(c)],$$
furthermore, $e^{[\cdot,a]}$ has $e^{-[\cdot,a]}$ as an inverse, and therefore $e^{[\cdot,a]}$ is a Leibniz algebra automorphism.
    
\end{proof}

As a simple corollary to the lemma above, we have that for any Leibniz algebra $A$ and any $a,b,c \in A$ we have that:
$$ (e^{\text{ad}_a} \circ \text{ad}_b \circ e^{-\text{ad}_a})(c) = e^{\text{ad}_a}([e^{-\text{ad}_a}(c),b]) = [c,e^{\text{ad}_a}(b)] = \text{ad}_{e^{\text{ad}_a}(b)}(c) $$
$$ \implies e^{\text{ad}_a} \circ \text{ad}_b \circ e^{-\text{ad}_a} = \text{ad}_{e^{\text{ad}_a}(b)}, $$
a fact which will turn out to be useful for the next proposition.

\begin{proposition}
The category of Leibniz algebras is isomorphic to a subcategory of the category of Leibniz racks whose morphisms are precisely all linear maps. The functor constructed to show this restricts to Fritz's isomorphism of the category of (real, finite dimensional) Lie algebras to the subcategory of Lie quandles whose morphisms are precisely all linear maps.
\end{proposition}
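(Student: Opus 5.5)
We construct a functor $F$ from finite-dimensional real Leibniz algebras to Leibniz racks. On objects, $F(A)$ is the underlying manifold $A$ with
\[
b \triangleleft_t a := e^{t\,\mathrm{ad}_a}(b), \qquad a,b\in A,\ t\in\mathbb{R}.
\]
On morphisms, $F(\varphi)$ is the pair $(\varphi,\mathrm{id}_{\mathbb{R}})$.

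\textbf{Step 1: $F(A)$ is a Leibniz rack.} Smoothness holds because $(a,b,t)\mapsto e^{t\,\mathrm{ad}_a}(b)$ is a composition of a polynomial map into $\mathrm{End}(A)$, the matrix exponential, and evaluation. Axiom (1), $b\triangleleft_0 a=b$, is immediate. Axiom (2) is the one-parameter group law $e^{s\,\mathrm{ad}_a}e^{t\,\mathrm{ad}_a}=e^{(t+s)\mathrm{ad}_a}$. For axiom (3) we must show
\[
e^{s\,\mathrm{ad}_c}\, e^{t\,\mathrm{ad}_a}(b) \;=\; e^{t\,\mathrm{ad}_{e^{s\,\mathrm{ad}_c}(a)}}\, e^{s\,\mathrm{ad}_c}(b).
\]
The corollary after the Lemma, applied with $sc$ in place of $a$, gives $e^{s\,\mathrm{ad}_c}\,\mathrm{ad}_a\, e^{-s\,\mathrm{ad}_c}=\mathrm{ad}_{e^{s\,\mathrm{ad}_c}(a)}$. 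Exponentiating the conjugation yields $e^{s\,\mathrm{ad}_c}e^{t\,\mathrm{ad}_a}e^{-s\,\mathrm{ad}_c}=e^{t\,\mathrm{ad}_{e^{s\,\mathrm{ad}_c}(a)}}$, which is exactly axiom (3). When $A$ is a Lie algebra, $[a,a]=0$ gives $\mathrm{ad}_a(a)=0$, hence $a\triangleleft_t a=a$, so $F(A)$ is a Lie quandle.

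\textbf{Step 2: morphisms and functoriality.} If $\varphi$ is a Leibniz algebra morphism, then $\varphi\circ\mathrm{ad}_a=\mathrm{ad}_{\varphi(a)}\circ\varphi$. Exponentiating gives $\varphi(b\triangleleft_t a)=\varphi(b)\triangleleft_t\varphi(a)$, so $(\varphi,\mathrm{id}_{\mathbb{R}})$ is a morphism of Leibniz racks, and $\varphi$ is linear. Identities and composites are clearly preserved. Faithfulness and injectivity on objects both follow from recovering the bracket from the rack operation:
\[
\frac{d}{dt}\Big|_{t=0} b\triangleleft_t a=[b,a].
\]
Thus the bracket is determined by $F(A)$, and distinct Leibniz structures on the same vector space give distinct racks.

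\textbf{Step 3: fullness onto linear maps.} The image subcategory has objects the racks $F(A)$ and morphisms all linear rack morphisms. We must check that every linear $\varphi$ with $\varphi(b\triangleleft_t a)=\varphi(b)\triangleleft'_t\varphi(a)$ is a Leibniz algebra morphism. Differentiating at $t=0$ and using linearity (so $d\varphi=\varphi$) gives $\varphi([b,a])=[\varphi(b),\varphi(a)]$. Here we must decide the group component of rack morphisms. We restrict to morphisms whose $\mathbb{R}$-component is the identity, which is what linearity of the time parameter forces in the definition of the subcategory. If instead a general $\lambda\in\mathbb{R}$ were allowed as the group homomorphism, the derivative would give $\lambda\,\varphi([b,a])=[\varphi(b),\varphi(a)]$, so this convention must be stated explicitly.

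We expect this bookkeeping about the $\mathbb{R}$-component, rather than any computation, to be the main obstacle. The restriction to Lie algebras then lands in Lie quandles, by the remark at the end of Step 1, and coincides with Fritz's functor $b\triangleleft_t a=e^{t\,\mathrm{ad}_a}b$. This gives the second claim.
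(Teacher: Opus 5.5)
Your Steps 1 and 2 are the paper's own argument. It uses the same functor, gets axiom (3) from the conjugation identity in the corollary to the Lemma, proves the intertwining $\varphi\circ\mathrm{ad}_a=\mathrm{ad}_{\varphi(a)}\circ\varphi$ (the paper does this termwise on the power series), and gets injectivity on objects by recovering $[b,a]$ as $\frac{d}{dt}\big|_{t=0}\, b\triangleleft_t a$.

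Step 3 goes beyond the paper. The paper's proof stops at ``injective on objects and faithful'', which only identifies Leibniz algebras with the image of $F$; it never verifies the clause that the morphisms are precisely all linear maps. Your differentiation argument supplies that clause, and your caveat is substantive, not cosmetic. The paper's definition of morphism allows any homomorphism $t\mapsto\lambda t$ on the parameter. Under it, the pair $(c\,\mathrm{id}_{\mathfrak g},\ t\mapsto t/c)$ with $c\neq 0$ is a linear endomorphism of $F(\mathfrak g)$, since $e^{(t/c)\,\mathrm{ad}_{ca}}(cb)=c\,e^{t\,\mathrm{ad}_a}(b)$. Yet for $c\neq 1$ and $\mathfrak g$ nonabelian it is not a Lie algebra morphism. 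So the clause holds only under the convention $\lambda=1$.

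Two small corrections. First, differentiating actually gives $\varphi([b,a])=\lambda\,[\varphi(b),\varphi(a)]$, not $\lambda\,\varphi([b,a])=[\varphi(b),\varphi(a)]$. Second, linearity does not ``force'' $\lambda=1$, since every $t\mapsto\lambda t$ is linear; it is a convention you must impose, as you then say.

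A similar convention is tacitly needed for injectivity on objects, in your argument and the paper's alike. Take two abelian Leibniz algebras on the same set whose different linear structures induce the same smooth structure; both go to the trivial rack. So the objects of the target subcategory should be taken to carry their linear structure, which the phrase ``linear maps'' already presupposes.
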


\begin{proof}
 For any finite dimensional real Leibniz algebra $A$, define $F(A)$ to the pair $(A,\triangleleft)$ where $a\triangleleft_s b = e^{s [\cdot,b]}(a)$ for all $a,b \in A$ and $s \in \mathbb{R}$. Since we'll need it later, we note that: $$ \frac{d}{ds}\bigg|_{s=0} (a\triangleleft_s b) = \frac{d}{ds}\bigg|_{s=0} e^{s [\cdot,b]}(a) = [a,b].$$

Now to see that $F(A)$ is a smooth $\mathbb{R}$-family of smooth racks, we first check that:
$$  a\triangleleft_0 b = e^{0 [\cdot,b]}(a) = a  $$
while at the same time:
$$ (a \triangleleft_t b ) \triangleleft_s b  = e^{s [\cdot,b]}e^{t [\cdot,b]}a = e^{(t+s) [\cdot,b]}a = a \triangleleft_{t+s}b$$
hence $F(A)$ satisfies the first property. Next, we see that:

$$ (a \triangleleft_t b ) \triangleleft_s c =e^{s [\cdot,c]}e^{t [\cdot,b]}a=e^{s [\cdot,c]}e^{t [\cdot,b]}e^{-s [\cdot,c]}e^{s [\cdot,c]}a $$ $$=e^{t e^{s [\cdot,c]}[\cdot,b]e^{-s [\cdot,c]}} (a\triangleleft_s c) =e^{t [\cdot,(b\triangleleft_s c)]} (a\triangleleft_s c) =(a\triangleleft_s c) \triangleleft_t (b\triangleleft_s c) $$

Note that if $A$ was a Lie algebra, so that $[a,a]=0$ for all $a \in A$, then we would also have that:
$$ a \triangleleft_s a = e^{s[\cdot,a]}a = a  $$
making $F(A)$ a Lie quandle in this case.

We note that for $F(A)=F(B) =(X,\triangleleft)$, we must have that $A$ and $B$ share the same underlying set of vectors $X$. We also find that for all $x,y \in X$:

$$ [x,y]_A = \frac{d}{ds}\bigg|_{s=0} (x \triangleleft_s y) =   [x,y]_B$$
Hence $A=B$, and so $A \mapsto F(A)$ is an injective function from the set of finite dimensional real Leibniz algebras to the set of smooth $\mathbb{R}$-families of smooth racks. 

 For a Leibniz algebra homomorphism $f:A \to B$, we have that:
$$ f(a\triangleleft^A_s b) = f\big(  e^{s  [\cdot,b]_A}(a)\big)= f\left(  \sum_{j=0}^{\infty} \frac{s^j}{j!} ([\cdot,b]_A)^j(a)\right)$$
$$=  \sum_{j=0}^{\infty} \frac{s^j}{j!} f\left(  ([\cdot,b]_A)^j(a)\right)=  \sum_{j=0}^{\infty} \frac{s^j}{j!}   ([\cdot,f(b)]_B)^j(f(a)) $$ $$=  e^{s [\cdot,f(b)]_B}(f(a)) = f(a)\triangleleft^B_sf(b) $$
Hence $F(f)=f$ (which clearly preserves identities and compositions) leads to $F$ being a functor from the category of finite dimensional real Leibniz algebras to the category of smooth $\mathbb{R}$-families of smooth racks. Moreover, $F$ is manifestly faithful, and so $F$ is a functor that is injective on both objects and faithful.
\end{proof}

\vspace{5mm}

This categorical embedding is telling us that the category of Leibniz racks has the same relationship to the category of finite-dimensional real Leibniz algebras that the category of finite dimensional $\mathbb{R}$-vector spaces and $\mathbb{R}$-linear maps has to the category of smooth manifolds and smooth maps. The same is true of Lie quandles with respect to finite-dimensional real Lie algebras, as Fritz \cite{Fritz} suggested. 

Leibniz racks are not only interesting for their connection to Leibniz algebras, as they are in some sense the fundamental building block of all smooth $G$-families of smooth racks. We will conclude this section with evidence for the previous claim with a demonstration of the equivalence of categories between certain smooth $G$-families of smooth racks and smooth $\mathfrak{g}$-families of Leibniz racks for finite-dimensional real Lie algebras $\mathfrak{g}$, following from the Lie-Cartan theorem. 

\vspace{5mm}
We have the following proposition.

\begin{proposition}
The category of smooth (simply-connected) Lie group families of smooth racks is isomorphic to a full subcategory of the category of smooth (real, finite dimensional) Lie algebra families of Leibniz racks.
\end{proposition}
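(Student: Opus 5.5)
We construct a functor $\Phi$ from smooth $G$-families of smooth racks ($G$ simply connected) to smooth $\mathfrak{g}$-families of Leibniz racks, where $\mathfrak{g}=\mathrm{Lie}(G)$ is regarded as a Leibniz algebra with our convention $\mathrm{ad}_a(b)=[b,a]$. We then show that $\Phi$ is injective on objects and fully faithful. On objects, given $(R,\triangleleft)$ we set
$$q\triangleleft_{(a,t)}p := q\triangleleft_{\exp(ta)}p .$$
This map is smooth because $\exp$ and $\triangleleft$ are smooth.

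\textbf{Checking the axioms.} Axiom (1) holds because $t\mapsto \exp(ta)$ is a one-parameter subgroup. Hence $q\triangleleft_{\exp(0)}p=q$ and $(q\triangleleft_{\exp(ta)}p)\triangleleft_{\exp(sa)}p=q\triangleleft_{\exp((t+s)a)}p$. The rack identity for $\triangleleft^a$ is the $G$-family axiom (3) with $g=\exp(ta)$ and $h=\exp(sa)$, since these commute. Axiom (2) is immediate from $\exp(t\cdot a)=\exp(1\cdot ta)$. For axiom (3), we apply the $G$-family axiom (3) with $g=\exp(ta)$ and $h=\exp(sb)$, which gives
$$h^{-1}gh=\exp\bigl(t\,\mathrm{Ad}_{\exp(-sb)}a\bigr).$$
The key sign check is that $\mathrm{Ad}_{\exp(-sb)}=e^{-s\,\mathrm{ad}^{\mathrm{Lie}}_b}$, where $\mathrm{ad}^{\mathrm{Lie}}_b(a)=[b,a]$. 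Under the paper's convention $\mathrm{ad}_b(a)=[a,b]=-[b,a]$, this equals $e^{s\,\mathrm{ad}_b}(a)$, which is exactly the twist in the definition. Morphisms $(f,\varphi)$ are sent to $(f,d\varphi)$, using $\varphi(\exp(ta))=\exp(t\,d\varphi(a))$. Functoriality is clear.

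\textbf{Injectivity and faithfulness.} Injectivity on objects holds because a connected Lie group is generated by $\exp(\mathfrak{g})$. Given $g=\exp(a_1)\cdots\exp(a_k)$, axiom (2) for $G$-families forces $q\triangleleft_g p=(\cdots(q\triangleleft_{\exp a_1}p)\cdots)\triangleleft_{\exp a_k}p$. So the $\mathfrak{g}$-family determines the $G$-family, and $G$ itself is recovered from $\mathfrak{g}$ up to the canonical simply connected group. Faithfulness holds because $d\varphi$ determines $\varphi$ on connected $G$ and $f$ is kept as is.

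\textbf{Fullness.} This step is where simple connectivity is used, via Lie's second theorem. A morphism $(f,\psi)$ of the image families, with $\psi:\mathfrak{g}\to\mathfrak{g}'$ a Leibniz morphism (hence a Lie algebra homomorphism), integrates to a unique $\varphi:G\to G'$ with $d\varphi=\psi$. The relation
$$f(p\triangleleft_{\exp a}q)=f(p)\triangleleft'_{\exp\psi(a)}f(q)=f(p)\triangleleft'_{\varphi(\exp a)}f(q)$$
then extends to all $g$ using the generation argument and axiom (2). Hence $(f,\varphi)$ is a preimage of $(f,\psi)$.

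\textbf{Main obstacle.} The main obstacle is the interpretation of ``isomorphic to a full subcategory''. Objects of the target need not come from a $G$-family, since a general $\mathfrak{g}$-family need not satisfy $\triangleleft^a_1=\triangleleft^{b}_1$ when $\exp a=\exp b$, nor be compatible with products. So we only claim an isomorphism onto the image, which is full by the argument above. To make ``isomorphism'' rather than ``equivalence'' literal, I would fix for each $\mathfrak{g}$ a chosen simply connected group, or identify $G$-families up to the canonical isomorphism of groups, and note this convention explicitly.
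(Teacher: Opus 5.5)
Your proposal is correct and follows the same overall route as the paper. It uses the same functor $q\triangleleft_{(a,t)}p=q\triangleleft_{\exp(ta)}p$, the same conjugation identity $h^{-1}gh=\exp\bigl(t\,\mathrm{Ad}_{\exp(-sb)}a\bigr)$ for the twisted axiom, and Lie's second theorem for fullness.

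The one step that genuinely differs is how you pass from one-parameter subgroups to all of $G$. You use that a connected Lie group is generated by $\exp(\mathfrak{g})$, together with the group-law axiom $(q\triangleleft_g p)\triangleleft_h p=q\triangleleft_{gh}p$. The paper's fullness argument, which its object-injectivity argument then reuses, instead asserts that every $g\in G$ equals $\exp_G(v)$ for some $v$. That is false for simply connected groups such as $\widetilde{SL_2(\mathbb{R})}$, so your version is the one that actually covers the stated generality.

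You are also more careful than the paper on three smaller points:
\begin{enumerate}
\item Your sign $\mathrm{Ad}_{\exp(-sb)}=e^{s\,\mathrm{ad}_b}$ is the one consistent with the paper's convention $\mathrm{ad}_a(b)=[b,a]$. The paper's final line instead carries $e^{-s\,\mathrm{ad}_v}$.
\item For faithfulness you use that $d\varphi$ determines $\varphi$ on connected $G$, which the paper skips.
\item You state explicitly that literal injectivity on objects requires fixing one simply connected group per Lie algebra. The paper assumes this only implicitly.
\end{enumerate}
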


\begin{proof}
 For any Lie group $G$, and any smooth $G$-family of smooth racks $(R,\triangleleft)$, we can define $F(R,\triangleleft) = (R,\triangleleft\!\!\triangleleft)$ where $\triangleleft\!\!\triangleleft: R^2\times \mathfrak{g}\times\mathbb{R} \to R$ we define:
$$ r {\triangleleft\!\!\triangleleft}_t^v r' =r \triangleleft_{\text{exp}(tv)} r' \text{ for all }r,r' \in R$$
 where $\text{exp}:\mathfrak{g} \to G$ is the exponential map. We note that:
 $$ (r_1 {\triangleleft\!\!\triangleleft}_t^u r_2 ){\triangleleft\!\!\triangleleft}_s^v r_3  = (r_1 \triangleleft_{\text{exp}(tu)} r_2) \triangleleft_{\text{exp}(sv)} r_3$$
$$= (r_1\triangleleft_{\text{exp}(sv)} r_3) \triangleleft_{\text{exp}(-sv)\text{exp}(tu)\text{exp}(sv)} (r_2\triangleleft_{\text{exp}(sv)} r_3) $$
$$= (r_1\triangleleft_{\text{exp}(sv)} r_3) \triangleleft_{\text{exp}(-sv)\text{exp}(tu)\text{exp}(sv)} (r_2\triangleleft_{\text{exp}(sv)} r_3) $$
So we note that $ t \mapsto  \text{exp}(-sv)\text{exp}(tu)\text{exp}(sv)$ has the property that:
$$ t+t' \mapsto \text{exp}(-sv)\text{exp}(tu)\text{exp}(sv)\text{exp}(-sv)\text{exp}(t'u)\text{exp}(sv) $$
$$ = \text{exp}(-sv)\text{exp}((t+t')u)\text{exp}(sv) $$
and $t=0$ implies that $\text{exp}(-sv)\text{exp}(tu)\text{exp}(sv)  =e \in G$. Hence $$t \mapsto  \text{exp}(-sv)\text{exp}(tu)\text{exp}(sv)$$ is a Lie homomorphism from $\mathbb{R}$ into $G$, and therefore it is of the form $t \mapsto \text{exp}(tw)$ for some $w \in \mathfrak{g}$. Now we note that:
 $$ d_0 (\text{exp}(-sv)\text{exp}(tu)\text{exp}(sv))(1) = d_0 (\Psi_{\text{exp}(-sv)} \circ \text{exp}(tu))(1) $$
$$= d_e (\Psi_{\text{exp}(-sv)}) \circ d_0(\text{exp}(tu))(1) = \text{Ad}_{\text{exp}(-sv)}(u) = e^{-s\; \text{ad}_v}(u) $$
and hence $\text{exp}(-sv)\text{exp}(tu)\text{exp}(sv) = \text{exp}(t e^{-s \; \text{ad}_v}(u) ) $, therefore we find that:
$$ (q_1 {\triangleleft\!\!\triangleleft}_t^u q_2 ){\triangleleft\!\!\triangleleft}_s^v q_3 =  (q_1\triangleleft_{\text{exp}(sv)} q_3) \triangleleft_{\text{exp}(t e^{-s \; \text{ad}_v}(u) ) } (q_2\triangleleft_{\text{exp}(sv)} q_3)  $$
$$=  (q_1{\triangleleft\!\!\triangleleft}^v_s q_3) \triangleleft_{t   }^{e^{-s \; \text{ad}_v}(u)} (q_2 {\triangleleft\!\!\triangleleft}^v_s q_3 )  $$

Now for any particular $v \in \mathfrak{g}$, the above tells us that:

$$ (q_1 {\triangleleft\!\!\triangleleft}_t^v q_2 ){\triangleleft\!\!\triangleleft}_s^v q_3 = (q_1{\triangleleft\!\!\triangleleft}^v_s q_3) \triangleleft_{t   }^{e^{-s \; \text{ad}_v}(v)} (q_2 {\triangleleft\!\!\triangleleft}^v_s q_3 )  $$ $$= (q_1{\triangleleft\!\!\triangleleft}^v_s q_3) \triangleleft_{t   }^{ v} (q_2 {\triangleleft\!\!\triangleleft}^v_s q_3 )  $$
while we can also easily calculate that:
$$  q  {\triangleleft\!\!\triangleleft}_t^v q = q \triangleleft_{\text{exp}(tv)}  q  = q$$
and:
$$ (q_1 {\triangleleft\!\!\triangleleft}_t^v q_2){\triangleleft\!\!\triangleleft}_s^v q_2 =(q_1 \triangleleft_{\text{exp}(tv)}  q_2)\triangleleft_{\text{exp}(sv)}  q_2 $$
$$=(q_1 \triangleleft_{\text{exp}(tv)}  q_2)\triangleleft_{\text{exp}(sv)}  q_2 = q_1 \triangleleft_{\text{exp}(tv)\text{exp}(sv)} q_2   $$
$$ =  q_1 \triangleleft_{\text{exp}((t+s)v)} q_2 = q_1{\triangleleft\!\!\triangleleft}_{t+s}^v q_2 $$
Finally, we note that for all $r,r' \in R$, all $a \in A$, and all $t \in \mathbb{R}$:
$$ r \triangleleft \!\! \triangleleft_t^a r' = r \triangleleft_{\exp(ta)}r' = r \triangleleft \!\! \triangleleft_1^{ta} r'   $$
Hence, for any smooth $G$-family of smooth racks, $F(Q,\triangleleft)$ is a smooth $\mathfrak{g}$-family of Leibniz racks. Now suppose $A:Q \to Q'$ is a homomorphism from the smooth $G$-family of smooth racks $(Q,\triangleleft)$ to the smooth $G'$-family of smooth racks $(Q',\triangleleft')$, meaning that:
$$ A(q_l\triangleleft_g q_r) = A(q_l) \triangleleft_{a(g)} A(q_r) $$
for a Lie endomorphism $a$ of $G$. Define $F(A):F(Q,\triangleleft) \to F(Q',\triangleleft')$ have the same underlying function as $A:Q \to Q$ (Allowing for notational abuse, this can be stated concisely as $F(A) =A$). Now note that:
$$ F(A)(q_l {\triangleleft\!\!\triangleleft}_t^v q_r) = A(q_l \triangleleft_{\text{exp}_G(tv)} q_r ) = A(q_l) \triangleleft_{a(\text{exp}_G(tv))} A(q_r) $$ $$= A(q_l) \triangleleft_{\text{exp}_{G'}(td_ea(v))} A(q_r) =  (F(A)(q_l)) {\triangleleft\!\!\triangleleft}_{t}^{d_ea(v)} (F(A)(q_r)). $$
We note that $F(A)=A$ automatically implies that $F$'s morphism part preserves composition and identities; hence $F$ is a functor from the category of smooth Lie group families of smooth racks to the category of smooth Lie algebra families of Leibniz racks.

We note that the morphism part of $F$ being defined $F(A) =A$ implies that $F$ is a faithful functor, hence the category of smooth Lie group families of smooth racks can be considered a category which is concrete over the category of smooth Lie algebra families of Leibniz racks.

 Let $(Q,\triangleleft)$ and $(Q',\triangleleft')$ be smooth simply-connected Lie group families of smooth racks, and let  $\phi : Q \to Q'$ be a homomorphism of smooth Lie algebra families of Leibniz racks from $F(Q,\triangleleft) $ to $F(Q',\triangleleft')$. Hence there is a Lie algebra homomorphism $\varphi: \mathfrak{g} \to \mathfrak{g}'$ such that:
 $$ \phi(q_1{\triangleleft\!\!\triangleleft}_t^v q_2 ) =  \phi(q_1){\triangleleft\!\!\triangleleft'}_t^{\varphi(v)} \phi( q_2)$$
 To each Lie algebra, we can associate a unique simply connected Lie group, we will call $\mathfrak{g}$'s Lie group $G$ (hence $(Q,\triangleleft)$ is a smooth $G$-family of smooth racks), and we will use $G'$ to refer to the Lie group associated to $\mathfrak{g}'$ (similarly, $(Q',\triangleleft')$ is a smooth $G'$-family of smooth racks). Moreover, to any Lie algebra homomorphism $\varphi:\mathfrak{g} \to \mathfrak{g}' $ there is a unique Lie group homomorphism $\Phi:G \to G'$ such that $T_e \Phi = \varphi$. For any $q_1,q_2 \in Q$ and any $g \in G$, there is a $v \in \mathfrak{g}$ such that $\text{exp}_G(v) =g$, hence:
 $$ \phi(q_2\triangleleft_g q_2) =  \phi(q_2{\triangleleft\!\!\triangleleft}_1^v q_2) =  \phi(q_2){\triangleleft\!\!\triangleleft'}_1^{\varphi(v)} \phi(q_2) $$ $$=  \phi(q_2)\triangleleft'_{\text{exp}_{G'}(\varphi(v))}\phi(q_2) = \phi(q_2)\triangleleft'_{\Phi(g)}\phi(q_2) $$
 Therefore $\phi$ is a homomorphism from $(Q,\triangleleft)$ to $(Q',\triangleleft')$ in the category of smooth Lie group families of smooth racks. Hence we have that: $$F(\phi:(Q,\triangleleft)\to (Q',\triangleleft')) = \phi:F(Q,\triangleleft)\to F(Q',\triangleleft')$$ it follows that $F$ is a full functor when restricted to the subcategory of all smooth simply connected Lie group families of smooth racks.

Consequently, if $(Q,\triangleleft)$ and $(Q',\triangleleft')$ are both smooth simply connected Lie group families of smooth racks such that $F(Q,\triangleleft) = F(Q',\triangleleft')$ and so that the identity map relating $F(Q,\triangleleft) $ and $ F(Q',\triangleleft')$ both lie in the image of $F$'s morphism part when restricted to $\text{Mor}((Q,\triangleleft),(Q',\triangleleft'))$ and $\text{Mor}((Q',\triangleleft'),(Q,\triangleleft))$, then $Q=Q'$ and $\triangleleft\!\!\triangleleft = {\triangleleft\!\!\triangleleft }'$. As we have seen from the fullness argument, this implies $(Q,\triangleleft)$ and $(Q',\triangleleft')$'s operations are parameterized by the same simply connected Lie group $G$, and furthermore that the identity map $\text{Id}:Q \to Q'$ determines a morphism from $(Q,\triangleleft)$ to $(Q',\triangleleft')$. Hence $F$ is not only fully-faithful on the subcategory of smooth simply connected Lie group families of smooth racks, it is also strictly injective on objects, and hence it determines an isomorphism of categories onto its image.

\end{proof}
 
\vspace{5mm}

 Hence we can see explicitly that any smooth $G$-family of smooth racks (smooth $G$-family of smooth quandles) can be reduced to a collection of Leibniz racks (Lie quandles), parallel to the manner in which simply connected Lie groups can be reconstituted from their Lie algebras.

 \section{Classification of smooth $\mathbb{R}^n$-families of Alexander quandles on $\mathbb{R}^m$} \label{Sec4}

 Consider the smooth $\mathbb{R}^n$-family of Alexander quandles on $\mathbb{R}^m $, that being the pairs $(\mathbb{R}^m,\triangleleft)$ where the operation $\triangleleft$ is given by:
$$ x \triangleleft_y x' = \alpha_y(x) + (\text{Id} - \alpha_y)(x') $$
for a function $y \mapsto \alpha_y \in GL_m(\mathbb{R}) $ for each $y \in \mathbb{R}^n$. Then from the group law axiom for smooth $G$-families of smooth quandles, we find that:
$$ \alpha_{y+y'}(x) + (\text{Id} - \alpha_{y+y'})(x')$$ $$ = x \triangleleft_{y+y'} x' =  (x \triangleleft_yx' ) \triangleleft_{y'} x' $$ $$ =  \alpha_{y'}(x \triangleleft_{y} x' ) + (\text{Id} - \alpha_{y'})(x') $$
$$ =  \alpha_{y'} \alpha_y(x) + (\alpha_{y'} - \alpha_{y'}\alpha_y)(x')   + (\text{Id} - \alpha_{y'})(x') $$
$$ =  \alpha_{y'} \alpha_y(x) + (\text{Id}  - \alpha_{y'}\alpha_y)(x')    $$
$$ \implies \alpha_{y'} \alpha_{y} = \alpha_{y+y'} $$
Hence $\alpha_y =  e^{ \sum_{j=1}^n y_j v_j }$ for some collection $v=\{v_j\}_{j=1}^n \subset \mathfrak{gl}_m(\mathbb{R})$ of pairwise commuting $m$ by $m$ matrices:
$$ x \triangleleft_y^v x' =  e^{   \sum_{j=1}^n y_j v_j}(x) + (\text{Id} -  e^{  \sum_{j=1}^n y_j v_j })(x') $$
Now suppose that there was an isomorphism $\varphi:\mathbb{R}^m\to \mathbb{R}^m$ from the Lie quandle $(\mathbb{R}^m,\triangleleft^v)$ to the Lie quandle $(\mathbb{R}^m,\triangleleft^{u})$. Then $\varphi$ would be a diffeomorphism such that:

$$  e^{ \sum_{j=1}^ny_j u_j  }(\varphi(x)) + (\text{Id} - e^{\sum_{j=1}^ny_j u_j })(\varphi(x')) $$ $$= \varphi(x)  \triangleleft_y^u \varphi(x') = \varphi(x \triangleleft_y^v x')  = \varphi(e^{\sum_{j=1}^ny_j v_j }(x) + (\text{Id} - e^{\sum_{j=1}^ny_j v_j })(x') ) $$
$$  \implies  e^{ \sum_{j=1}^ny_j u_j }(\varphi(x)) + (\text{Id} - e^{\sum_{j=1}^ny_j u_j })(\varphi(x')) $$ $$= \varphi(e^{\sum_{j=1}^ny_j v_j }(x) + (\text{Id} - e^{\sum_{j=1}^ny_j v_j })(x') ) $$

$$  d_0(e^{ \sum_{j=1}^ny_j u_j }(\varphi(x)) + (\text{Id} - e^{\sum_{j=1}^ny_j u_j })(\varphi(x')))( \partial_{y_j} ) =   u_j (\varphi(x) -  \varphi(x')) $$
$$ d_0(\varphi(e^{\sum_{j=1}^ny_j v_j }(x) + (\text{Id} - e^{\sum_{j=1}^ny_j v_j })(x') ) )(\partial_{y_j})  =(d_x\varphi \circ v_j) (x   -  x')  $$
$$ \partial_{y_j}  \implies  u_j(\varphi(x) -  \varphi(x')) =  (d_x\varphi\circ  v_j)(x   -  x')  $$
$$    u_j(\varphi(x) -   \varphi(x) - d_x\varphi(x'-x) - o(x'-x)) =  d_x\varphi\circ  v_j(x   -  x')  $$
 $$  \implies   (u_j\circ  d_x\varphi )(x-x') + o(x-x') = ( d_x\varphi\circ  v_j)(x   -  x')  $$
$$  \implies    o(x-x') =    (d_x\varphi\circ v_j -u_j \circ  d_x\varphi)(x-x') $$
$$  \implies   (u_j \circ  d_x\varphi )(x-x') =    (d_x\varphi\circ v_j)(x-x') $$
Because $x'$ is arbitrary, we must have that $u_j \circ  d_x\varphi = d_x\varphi\circ v_j$ for each index $j$, and hence $u_j$ and $v_j$ are conjugate to each other by the same invertible linear map for all $j=1,...,n$.

 On the other hand, let $A$ be an invertible linear transformation such that $u_j = Av_jA^{-1}$ for pairwise commuting sets $\{v_j\}_{j=1}^n,\{u_j\}_{j=1}^n$ of $m$ by $m$ matrices, and define $\varphi(x) = Ax$. Then $\varphi$ is a diffeomorphism with:
 $$ \varphi (x\triangleleft_y^vx' ) = A\circ e^{\sum_{j=1}^ny_j v_j}(x)+(A-A\circ e^{\sum_{j=1}^ny_j v_j})(x') $$
 $$= A\circ e^{\sum_{j=1}^ny_j v_j}((A^{-1}\circ A )(x))+(A -A \circ e^{\sum_{j=1}^ny_j v_j})( (A^{-1} \circ A )(x')) $$
  $$= A \circ e^{\sum_{j=1}^ny_j v_j}\circ A^{-1}  (\varphi(x))+(\text{Id}-A \circ e^{\sum_{j=1}^ny_j v_j}\circ A^{-1})(  \varphi(x')) $$
  $$=  e^{A  \sum_{j=1}^ny_j v_j A^{-1}}  (\varphi(x))+(\text{Id}- e^{  A \sum_{j=1}^ny_j v_j A^{-1}})(  \varphi(x'))  $$
   $$=  e^{\sum_{j=1}^ny_j u_j}  (\varphi(x))+(\text{Id}- e^{ \sum_{j=1}^ny_j u_j})(  \varphi(x')) = \varphi(x) \triangleleft_y^u \varphi(x') $$
   Hence we have that $(\mathbb{R}^m,\triangleleft^v)$ is isomorphic to $(\mathbb{R}^m,\triangleleft^u)$ as a smooth $\mathbb{R}^n$-family of smooth quandles if and only if the vectors $v_j$ and $u_j$ are related by conjugation by the same invertible matrix for each $j =1,...,n$.
   
    \section{Noether's First Theorem} \label{Sec5}

Fritz proposed a generalization of Noether's first theorem \cite{Fritz} , and that potentially connectedness may play some role as a hypothesis for this general version of the theorem on Lie quandles. We'll begin this section by demonstrating that connectedness is not a necessary condition to imply that a Lie quandle obeys an analogue of Noether's first theorem, though we say nothing about suffficiency here. 

We will begin by recalling the form of Noether's first theorem in the context of Lie quandles, which is that a Lie quandle $(Q,\triangleleft)$ satisfies:
$$    q \triangleleft_t p = q \text{ for all } t \in \mathbb{R}    \iff  p \triangleleft_t q = p \text{ for all } t \in \mathbb{R} ,  \text{ for all }q,p \in Q  $$
This theorem is true of the natural Lie quandle associated to a Lie algebra, which motivated Fritz to ask what properties a general Lie quandle must have to obey it. This theorem has an obvious generalization to the situation of a smooth $G$-family of smooth racks $(R,\triangleleft)$:
 $$  q \triangleleft_g r = q \text{ for all } g \in G   \iff  r \triangleleft_g q = r \text{ for all } g \in G ,  \text{ for all }r,q \in R   $$
It is clear already that we will need additional hypotheses on a smooth $G$-family of smooth racks in order for it to obey Noether's first theorem, since not all Leibniz algebras possess this property. The necessary and sufficient conditions for a Leibniz algebra to obey Noether's first theorem are given in the proposition below.

\vspace{5mm}

\begin{proposition}
The smooth $\mathbb{R}$-family of smooth racks associated to a (finite dimensional, real) Leibniz algebra $A$ obeys Noether's first theorem if and only if $[a,b]=0$ implies that $[b,a]=0$ for all $a,b \in A$.    
\end{proposition}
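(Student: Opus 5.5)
The plan is to reduce Noether's first theorem for $F(A)$ to a statement about the bracket. The key claim is that, for all $a,b\in A$,
$$a\triangleleft_t b = a \text{ for all } t\in\mathbb{R} \iff [a,b]=0.$$
Once this is established, the Noether condition for $F(A)$ reads: $[a,b]=0 \iff [b,a]=0$ for all $a,b\in A$. By symmetry in $a$ and $b$, this biconditional is equivalent to the one-directional implication $[a,b]=0\Rightarrow[b,a]=0$, which is exactly the condition in the proposition.

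To prove the key claim, recall from the construction of $F(A)$ in the earlier proposition that $a\triangleleft_t b = e^{t\,\text{ad}_b}(a)$.

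For the forward direction, suppose $e^{t\,\text{ad}_b}(a)=a$ for all $t$. Differentiating at $t=0$, as was already done in that proposition, gives $[a,b]=\frac{d}{dt}\big|_{t=0}(a\triangleleft_t b)=0$.

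For the converse, suppose $[a,b]=\text{ad}_b(a)=0$. Then $\text{ad}_b^j(a)=0$ for every $j\ge 1$, so the power series collapses:
$$e^{t\,\text{ad}_b}(a)=\sum_{j\ge0}\frac{t^j}{j!}\text{ad}_b^j(a)=a.$$
Alternatively, the constant function $a$ solves the linear initial value problem $u'=[u,a]$-type equation $u'=\text{ad}_b(u)$ with $u(0)=a$, and uniqueness of solutions, as used in the earlier lemma, gives the same conclusion.

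There is no real obstacle here. The only point needing care is the bookkeeping of the orientation convention $\text{ad}_b(a)=[a,b]$: the hypothesis "$q\triangleleft_t p=q$ for all $t$" corresponds to $[q,p]=0$, and the conclusion "$p\triangleleft_t q=p$ for all $t$" corresponds to $[p,q]=0$. Finally, I note that when $A$ is a Lie algebra the condition holds automatically by antisymmetry, which recovers Fritz's observation. The non-Lie example $[e_1,e_1]=e_2$ satisfies the condition vacuously in the only nontrivial pair, so no counterexample is needed for the proof itself.
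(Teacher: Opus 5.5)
Your proof is correct and follows essentially the same route as the paper, which uses your two ingredients (differentiating $e^{t[\cdot,b]}(a)$ at $t=0$, and collapsing the exponential series when $[a,b]=0$) once in each direction of the biconditional. The only change is organizational: you isolate the equivalence ``$a\triangleleft_t b=a$ for all $t$'' $\iff$ $[a,b]=0$ as a lemma and then appeal to the symmetry in $a,b$, which also makes explicit the series-collapse step that the paper's converse direction uses without comment.
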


\begin{proof}
     First, suppose that $[a,b]=0$ implies that $[b,a]=0$ for all $a,b \in A$, and that $e^{t[\cdot,b]}(a) = a \triangleleft_t b = a $ for all $ t \in \mathbb{R} $. Then we must have that:
$$ 0 = \frac{d}{dt}\bigg|_{t=0}e^{t[\cdot,b]}(a)  = [a,b]  $$
hence $[b,a]=0$ by hypothesis. This then implies that:
$$ b \triangleleft_t a = e^{t[\cdot,a]}(b) = \sum_{j=0}^\infty \frac{t^j}{j!} [\cdot,a]^j(b) = b  $$
For the other direction, suppose $A$ obeys Noether's first theorem and $[a,b] =0$. Then it follows that $ a \triangleleft_t b = a $ for all $t \in \mathbb{R}$, and so $b\triangleleft_t a =b$ for all $t \in \mathbb{R}$. Thus we have that:
$$ 0 = \frac{d}{dt}\bigg|_{t=0}e^{t[\cdot,a]}(b) = [b,a]. $$\end{proof}

\vspace{5mm}

The property above, that $[a,b]=0$ implies that $[b,a]=0$ for all $a,b \in A$, does not force the bracket of our Leibniz algebra to be either symmetric or anti-symmetric, which can be seen from the existence of nontrivial Lie brackets, and from the Leibniz algebra structure on $\mathbb{R}^2$ given by bilinearly extending $[e_1,e_1]=e_2$, $[e_1,e_2]=[e_2,e_1]=[e_2,e_2]=0$. Another way in which Fritz conjecture can be safely modified is by looking for a hypothesis weaker than connectedness, as evidenced by the following proposition.

\vspace{5mm}

   \begin{theorem}
       
   All smooth $G$-families of smooth quandles of the form:
   $$ f'H\triangleleft_{g} fH = f\rho_{g}(f^{-1}f')H  $$
   where $F$ is a Lie group, $H \subset F$ is a Lie subgroup, and $g \mapsto \rho_g \in \text{Aut}(F)$ is a $G$-action on $F$ such that $\rho_g(H) \subset H$ and $f^{-1}\rho_{g} (f) \in H \implies f \rho_{g}(f^{-1}) \in H$ for each $g \in G$ have the first Noether property (In particular, for every Lie group $F$ with $H=\{e\}$ and every Lie group action $ G \ni g \mapsto \rho_g:F \to F $).  
   \end{theorem}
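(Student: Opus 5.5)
The plan is to translate both sides of the Noether equivalence into conditions on a single group element, and then use the hypothesis on $\rho$ twice, once in each direction. Fix cosets $q=f'H$ and $r=fH$ and put $k=f^{-1}f'\in F$. Since the operation is assumed to be a well-defined smooth $G$-family of smooth quandles on $F/H$, I work with these representatives throughout; any other choice replaces $k$ by $h_1^{-1}kh_2$ with $h_1,h_2\in H$, and the conditions below are unchanged because $\rho_g(H)\subset H$.

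First I unwind the left-hand condition. We have $q\triangleleft_g r=f\rho_g(k)H$, and this equals $f'H=fkH$ exactly when
\[
k^{-1}\rho_g(k)\in H .
\]
So $q\triangleleft_g r=q$ for all $g$ is equivalent to $k^{-1}\rho_g(k)\in H$ for all $g\in G$. Similarly, $r\triangleleft_g q=f'\rho_g(f'^{-1}f)H=fk\rho_g(k^{-1})H$, and this equals $fH$ exactly when
\[
k\rho_g(k^{-1})\in H .
\]
So $r\triangleleft_g q=r$ for all $g$ is equivalent to $k\rho_g(k^{-1})\in H$ for all $g$.

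The forward implication is then exactly the stated hypothesis, applied to $k$ and each $g$.

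The reverse implication is the main point, since the hypothesis is only one-directional. Here I plan to exploit that $\rho$ is a group action and that each $\rho_g$ preserves $H$.
\begin{enumerate}
\item Suppose $k\rho_g(k^{-1})\in H$ for all $g$. Fix $g$ and use the assumption at $g^{-1}$: $k\rho_{g^{-1}}(k^{-1})\in H$.
\item Set $k'=k^{-1}$. Then this reads $k'^{-1}\rho_{g^{-1}}(k')\in H$.
\item The hypothesis now gives $k'\rho_{g^{-1}}(k'^{-1})=k^{-1}\rho_{g^{-1}}(k)\in H$.
\item Apply $\rho_g$, which maps $H$ into $H$ and satisfies $\rho_g\rho_{g^{-1}}=\text{Id}$. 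This yields $\rho_g(k^{-1})k\in H$.
\item Invert to obtain $k^{-1}\rho_g(k)\in H$.
\end{enumerate}
As $g$ was arbitrary, this is the left-hand condition. The only delicate points are the bookkeeping of inverses and the use of $\rho_g(H)\subset H$ for every $g$, including $g^{-1}$.

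For the parenthetical case $H=\{e\}$, both conditions reduce to $\rho_g(k)=k$, so they are trivially equivalent, and the hypothesis on $\rho$ holds automatically. This shows that every such family has the first Noether property.
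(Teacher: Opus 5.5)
Your argument is correct and is essentially the paper's. Both reduce $q\triangleleft_g r=q$ and $r\triangleleft_g q=r$ to $k^{-1}\rho_g(k)\in H$ and $k\rho_g(k^{-1})\in H$ for $k=f^{-1}f'$ and then invoke the hypothesis on $\rho_g$; the paper writes out only the forward direction, with some garbled inverses in its displayed group elements, and leaves the converse to the symmetry of the statement in $q$ and $r$. Your detour through $g^{-1}$ for the converse is valid but unnecessary: since the hypothesis holds for every element of $F$, applying it to $k^{-1}$ at the same $g$ turns $k\rho_g(k^{-1})\in H$ directly into $k^{-1}\rho_g(k)\in H$, so the hypothesis is already a biconditional rather than ``only one-directional.''
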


\begin{proof}
    Suppose that $f'H\triangleleft_{g} fH = f'H$, meaning that:
$$ f\rho_{g}(f^{-1}f') H = f'H   \implies  ((f')^{-1}f)^{-1}\rho_{g}( f^{-1}f' )  \in  H  $$
Now by hypothesis, we must have that:
$$   ( f^{-1}f')^{-1}\rho_{f}( (f')^{-1}f )  \in  H   \implies  f'\rho_{g}((f')^{-1}f)H = fH  $$
$$ \implies  fH\triangleleft_{g} f'H = fH$$.
\end{proof} 

\vspace{5mm}
   
Due to the existence of disconnected Lie groups such as the multiplicative group $\mathbb{R}\backslash\{0\}$, the theorem above shows that connectedness is at best a sufficient condition to imply Noether's first theorem. We also would now like to demonstrate the sufficiency of a different condition known in the quandle/rack literature as being "faithful".

\vspace{5mm}

\begin{definition}
    
A rack $(R,\triangleleft)$ is said to be \textbf{faithful} if the map defined $ r \mapsto R_r $ (where $R_r(r') = r' \triangleleft r$) for all $r \in R$ is injective.
\end{definition}

\vspace{5mm}

\begin{proposition}
Suppose $(R,\triangleleft)$ is a smooth $G$-family of smooth racks having the property that $(R,\triangleleft_{g'})  $ is faithful for some $g' $ belonging to $G$'s center, then $(R,\triangleleft)$ has the first Noether property.
\end{proposition}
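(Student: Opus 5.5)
The plan is to use faithfulness of $(R,\triangleleft_{g'})$ to turn the desired fixed-point identity $r\triangleleft_h q=r$ into an equality of right translations, $R^{g'}_{r\triangleleft_h q}=R^{g'}_r$. Throughout, write $R^g_p(x)=x\triangleleft_g p$. By axioms (1) and (2), each $R^g_p$ is a bijection with inverse $R^{g^{-1}}_p$. Since the Noether statement is symmetric in the two elements, it suffices to prove one implication: assume $q\triangleleft_k r=q$ for all $k\in G$, and show $r\triangleleft_h q=r$ for every $h\in G$.

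First I would rewrite axiom (3) as an identity of maps. For any $p,s\in R$ and $g,h\in G$, applying (3) to an arbitrary $x$ gives
$$R^h_s\circ R^g_p=R^{h^{-1}gh}_{p\triangleleft_h s}\circ R^h_s .$$
This yields two uses.
\begin{enumerate}
\item Take $p=q$, $s=r$, $g=h$, $h=g'$. By hypothesis $q\triangleleft_{g'}r=q$. Since $g'$ is central, $g'^{-1}hg'=h$. Hence $R^{g'}_r\circ R^h_q=R^h_q\circ R^{g'}_r$, so $R^h_q$ and $R^{g'}_r$ commute.
\item Take $p=r$, $s=q$, $g=g'$. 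Centrality again gives $h^{-1}g'h=g'$. Hence $R^h_q\circ R^{g'}_r=R^{g'}_{r\triangleleft_h q}\circ R^h_q$, that is, $R^{g'}_{r\triangleleft_h q}=R^h_q\circ R^{g'}_r\circ(R^h_q)^{-1}$.
\end{enumerate}

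Combining the two, the commutation from the first use collapses the conjugate in the second to $R^{g'}_{r\triangleleft_h q}=R^{g'}_r$. Faithfulness of $(R,\triangleleft_{g'})$, meaning injectivity of $p\mapsto R^{g'}_p$, then gives $r\triangleleft_h q=r$. Since $h$ was arbitrary, this proves the implication, and the reverse implication follows by swapping the roles of $q$ and $r$.

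I expect the only delicate point to be bookkeeping of the conjugated group labels $h^{-1}gh$ in axiom (3). The centrality of $g'$ is exactly what makes both uses land on the single label $g'$ (respectively $h$). Without centrality, the argument would only give $R^{h^{-1}g'h}_{r\triangleleft_h q}=R^{g'}_r$, which faithfulness at one fixed label cannot exploit. Notably, no smoothness or quandle axiom is needed: idempotence $q\triangleleft_g q=q$ is never used.
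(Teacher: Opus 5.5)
Your proposal is correct and takes essentially the same approach as the paper. Both arguments use axiom (3) with the central label $g'$, plus the hypothesis at $g'$, to get that $R^{g'}_r$ and $R^h_q$ commute. Both then apply the swapped instance of axiom (3) and cancel the invertible $R^h_q$ to obtain $R^{g'}_{r\triangleleft_h q}=R^{g'}_r$, and finish by faithfulness of $(R,\triangleleft_{g'})$.
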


\begin{proof}
     First, for a bit of notation, we write $ R^g_y(x) = x \triangleleft_g y $. Generalized right distributivity and $g'$ in the center of $G$ buys us that:
$$ (a \triangleleft_g x)\triangleleft_{g'} y = (a \triangleleft_{g'} y)\triangleleft_{g} (x \triangleleft_{g'} y) $$
$$ \implies  R^{g'}_y \circ R^g_x = R^{g}_{x \triangleleft_{g'} y}  \circ R^{g'}_y $$
While if $ x \triangleleft_{g}y =x$ for all $g \in G$ then we must also have that $ R_{x}^{h} = R_{x \triangleleft_{g}y}^{h} $  for all $g,h \in G  $. Hence we now see that:

$$ (a \triangleleft_{g'} y ) \triangleleft_{g} x = (a \triangleleft_{ g } x) \triangleleft_{g'} (y \triangleleft_{g} x) $$
$$ \implies   R_{x}^{g} \circ R^{g'}_y = R^{g'}_{y \triangleleft_g x} \circ R_{x}^g $$
While our previous observations lead to:
$$ R_{x}^{g} \circ R^{g'}_y  = R_{x\triangleleft_{g'} y}^{g} \circ R^{g'}_y =R^{g'}_y \circ R^g_x  $$
$$ R^g_x \text{ invertible} \implies R^{g'}_y  = R^{g'}_{y \triangleleft_g x} $$ $$  (R,\triangleleft_{g'}) \text{ faithful} \implies y \triangleleft_g x  =y \text{ for all }g \in G$$

\end{proof}

\vspace{5mm}

It is worth noting that faithfulness for a central rack operation is also not a necessary condition, since every trivial smooth $G$-family of smooth racks obeys Noether's first theorem but any having more than one point (such as the trivial structure on $\mathbb{R}$) is not faithful for any group element.


\vspace{5mm}

\section{Discussion}

We've seen that none of the available hypotheses for Noether's first theorem can be necessary, and hence more work is needed to find sharp hypotheses. Both this article and Fritz's article leave the topic of Noether's second theorem, and its generalization to this context, unaddressed. Thorough investigation of Noether's second theorem in the context of Lie algebras and Lie quandles will be the subject of future work.

This article focuses mostly on the connection smooth $G$-families of smooth racks/quandles have to Fritz's work. Consequently, it has left further exploration of the extension of many knot invariants associated to $G$-families of quandles to the smooth setting to future work.

Finally, the small classification case dealt with here can be greatly strengthened. To that end, a sequel to this article will work towards a systematic classification of smooth $G$-families of smooth racks.

\section*{Acknowledgement} 
	The authors would like to thank Masahico Saito for his insight regarding $G$-families of quandles, as well as Tobias Fritz and Luc Ta for their helpful comments and questions.
    

\end{document}